\newcommand{\id}{{\operatorname{id}}}
\newcommand{\Tot}{{\operatorname{\rm Tot}}}
\newcommand{\ra}{\rightarrow}
\newcommand{\sm}{\boldsymbol{Sm}}
\newcommand{\blank}{\underline{\;\;}}
\newcommand{\ds}{\displaystyle}
\newcommand{\ub}{\underline}
\newcommand{\pt}{\mathrm{pretr}}
\newcommand{\mc}{\mathcal}
\newcommand{\by}[1]{\overset{#1}{\to}}
\newcommand{\longby}[1]{\overset{#1}{\longrightarrow}}
\newcommand{\iso}{\by{\sim}}
\newcommand{\eff}{\mathrm{eff}}
\renewcommand{\phi}{\varphi}
\newcommand{\dg}{{dg_e}}
\newcommand{\Z}{\mathbb{Z}}
\begin{document}
\begin{abstract}	
Recently, Levine   constructed a DG category
whose homotopy category is equivalent to the full subcategory of motives
over a base-scheme $S$ generated by the motives of smooth projective $S$-schemes, 
assuming that $S$ is itself smooth over a perfect field. 
In his construction, the tensor structure required $\mathbb{Q}$-coefficients. The author has previously shown how to provide a tensor structure on the homotopy category mentioned above, when $S$ is semi-local and essentially smooth over a field of characteristic zero, extending Levine's tensor structure with $\mathbb{Q}$-coefficients.
In this article, it is shown that, under these conditions, the fully faithful functor $\rho_S$ that Levine constructed from his category of smooth motives to the category $DM_S$ of motives over a base (defined by Cisinski-D\'{e}glise) is a tensor functor.
\end{abstract}

\title{Tensor functor from Smooth Motives to motives over a base}
\author{Anandam Banerjee}
\date{\today}
\subjclass[2010]{
Primary 19E15, 18E30; Secondary 19D23, 14C25, 14F05}
\address{
Mathematical Sciences Institute,\endgraf
Australian National University,\endgraf
Canberra, ACT 0200,\endgraf
Australia}
\email{anandam.banerjee@anu.edu.au}
\keywords{Motives, DG category, pseudo-tensor structure, tensor functor}
\maketitle

\setcounter{tocdepth}{1}
\setcounter{secnumdepth}{3}
\theoremstyle{definition}
\newtheorem{dfn}{Definition}[section]
\theoremstyle{plain}
\newtheorem{thmintro}{Theorem}
\newtheorem{corintro}{Corollary}
\newtheorem{thm}{Theorem}[section]
\newtheorem{lem}[thm]{Lemma}
\newtheorem{prop}[thm]{Proposition}
\newtheorem{cor}[thm]{Corollary}
\theoremstyle{remark}
\newtheorem{clm}{Claim}
\newtheorem{rem}[dfn]{Remark}
\numberwithin{equation}{section}

\section*{Introduction}
Although this program has not been realized, Voevodsky has constructed a triangulated category of geometric motives over a perfect field, which has many of the properties expected of the derived category of the conjectural abelian category of motives. The construction of the triangulated category of motives has been extended by Cisinski-D\'{e}glise (\cite{cd}) to a triangulated category of motives over a base-scheme $S$, denoted $DM_S$. Hanamura (\cite{hana}) has also constructed a triangulated category of motives over
a field, using the idea of a ``higher correspondence", with morphisms built out of Bloch's cycle complex.
Recently, Bondarko (in \cite{bondarko}) has refined Hanamura's idea and limited it to smooth projective varieties to construct a DG category of motives. Assuming resolution of singularities, the homotopy category of this DG category is equivalent to 
Voevodsky's category of effective geometric motives. 
Soon after, Levine (in \cite{smmot}) extended this idea to construct a DG category of ``smooth motives''
over a base-scheme $S$ generated by the motives of smooth projective $S$-schemes, 
where $S$ is itself smooth over a perfect field. Its homotopy category is equivalent to the full subcategory of  Cisinski-D\'{e}glise category of effective motives over $S$ generated by the smooth
projective $S$-schemes.
Both these constructions lack a tensor structure in general. However, passing to $\mathbb{Q}$-coefficients, Levine replaced the cubical construction with alternating cubes, which yields a tensor structure on his DG category.

In \cite{tenssmmot}, a pseudo-tensor structure is constructed on a  DG category $\dg Cor_S$ which induces a tensor structure on the homotopy category of DG complexes,  such  that, in case $S$ is semi-local and essentially smooth over a field of characteristic zero, it
induces a tensor structure on the category of smooth motives over $S$. It is proved that
\begin{thmintro}\label{int} Suppose $S$ is semi-local and essentially smooth over a field of characteristic zero. Then, 
there is a tensor structure on the category $SmMot^{\mathrm{eff}}_{gm}(S)$ of smooth effective geometric motives over $S$ making it into a tensor triangulated category. 
\end{thmintro}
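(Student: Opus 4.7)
\bigskip

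\noindent\textbf{Proof proposal.} The plan is to produce the tensor structure on $SmMot^{\eff}_{gm}(S)$ by lifting it from the chain level, working inside the DG category $\dg Cor_S$ constructed in \cite{tenssmmot}. Recall that objects of $\dg Cor_S$ are smooth projective $S$-schemes and morphisms are built out of cubical higher Chow-type cycles satisfying a suitable ``good position'' (semi-purity / quasi-finiteness over $S$) condition with respect to the faces of $\square^n$. The proposed tensor product at the object level is the fibre product $X\times_S Y$, and on morphisms it is induced by the external product of cycles
\[
\alpha\boxtimes_S\beta\in z^{*}\bigl((X\times_S X')\times_S (Y\times_S Y')\times \square^{m+n}\bigr).
\]
First I would verify, using the usual DG-Leibniz formula for face maps of cubes, that $\boxtimes_S$ is a map of complexes and is associative up to the canonical reordering of the cube factors.

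The main subtlety, and the reason the statement requires the semi-local essentially smooth hypothesis in characteristic zero, is that the external product of two cycles in good position need not itself be in good position with respect to all faces of $\square^{m+n}$. To deal with this I would follow the two-step strategy of \cite{tenssmmot}: first, define only a \emph{pseudo-tensor} structure on $\dg Cor_S$, i.e.\ a multi-object operation defined only on subcomplexes of admissible cycles; second, invoke a moving lemma to show that the inclusion of these subcomplexes into the full morphism complexes is a quasi-isomorphism. The moving lemma is the heart of the argument: it is exactly here that semi-locality (which reduces the problem to finitely many closed points), essential smoothness (to have a good notion of fat points and blow-ups in the base), and characteristic zero (for generic smoothness and resolution of indeterminacies of rational maps moving cycles into general position) are all used. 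I would expect to adapt the Friedlander--Lawson / Levine moving machinery to the relative semi-local setting, replacing projections onto linear subspaces by correspondences provided by rational equivalences supported over the finitely many closed points of $S$.

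Once the moving lemma is in place, passage to the homotopy category $Ho(C^b(\dg Cor_S))$ of bounded DG complexes turns the pseudo-tensor structure into an honest symmetric monoidal structure, because the ambiguity in choosing representatives in good position becomes invertible after inverting quasi-isomorphisms. I would then check that the localisation producing $SmMot^{\eff}_{gm}(S)$ from this homotopy category is compatible with the tensor structure: concretely, that the class of ``Nisnevich/cdh-type'' relations and $\mathbb{A}^1$-homotopy relations defining smooth motives form a tensor ideal. This compatibility is formal once the moving lemma is known, since both sides of the generating relations admit representatives in good position with respect to any finite collection of auxiliary cycles.

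The most delicate step, and hence the expected main obstacle, is the moving lemma with parameters in $\square^\bullet$ over a semi-local essentially smooth base in characteristic zero. Standard moving lemmas produce \emph{some} rational equivalence, but here one needs the rational equivalence itself to be given by an admissible cubical cycle, so the deformation must preserve good position on \emph{all} codimension-one faces simultaneously. I would handle this inductively on the dimension of the cubical parameter, using a Chow's-lemma-type resolution together with the characteristic zero hypothesis to ensure that the generic fibre of the moving correspondence is smooth, and semi-locality to patch local moves into a global one without obstruction from Picard-type phenomena. Compatibility with the triangulated structure (exactness of $\otimes$ in each variable) is then a routine verification using the cone construction in $C^b(\dg Cor_S)$ and the fact that the tensor product preserves the class of chosen distinguished triangles coming from closed immersions and blow-ups.
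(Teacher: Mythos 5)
You should first be aware that the paper under review does not actually prove this statement: Theorem~1 is imported from \cite{tenssmmot}, and the present article only recalls the ingredients it needs from there (the pseudo-tensor structure on the extended DG category $\dg PrCor_S$, the representability of $P_2^{\dg Cor_S}$, the co-multiplication maps $\delta^*_{a,b}$, the quasi-equivalence $F:dgPrCor_S\to\dg PrCor_S$ with $\pi\circ F=\id$, and the identification $SmMot^{\eff}_{gm}(S)\simeq\mc{K}^b(dgPrCor_S)^\natural$). Your skeleton --- external products of cubical cycles, a pseudo-tensor structure carried by subcomplexes of cycles in good position, a moving lemma showing the relevant inclusions are quasi-isomorphisms, and descent to the homotopy category --- is the right one and matches that cited construction in outline.

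As a proof, however, it has one genuine gap and two structural inaccuracies. The gap: the moving lemma, which you yourself call the heart of the argument and which is the only place the hypotheses (semi-local, essentially smooth, characteristic zero) do any work, is not proved but only described programmatically (``I would expect to adapt\dots'', ``I would handle this inductively\dots''). Until it is supplied --- in \cite{tenssmmot} this rests on Levine's moving lemma for cycles over a semi-local base from \cite{smmot} --- the proposal establishes nothing. The inaccuracies: (i) $SmMot^{\eff}_{gm}(S)$ is just the idempotent completion of $\mc{K}^b(dgPrCor_S)$; there is no further Nisnevich or $\A^1$-localisation, since homotopy invariance is already built into the cubical Hom complexes, so your ``tensor ideal'' verification addresses a step that does not exist. (ii) The failure of good position is not located in the external product $\alpha\boxtimes_S\beta$ itself (which preserves the quasi-finiteness and face conditions, each face of $\square^{m+n}$ being a product of faces), but in the pullback along the co-multiplication $\delta_{a,b}:\square^{a+b}\to\square^{a}\otimes\square^{b}$ --- in effect a diagonal --- needed to convert external operations into an internal tensor product of correspondences; this is precisely why the construction passes through the extended category $\dg Cor_S$ with its bi-multiplicative, homotopy-invariant co-cubical object, a mechanism your sketch omits. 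Exactness of the resulting $\otimes$ on $H^0((\dg PrCor_S)^\pt)$ is, as you say, formal once the homotopy tensor structure is in place.
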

In \cite{smmot}, Levine constructed a functor from his category of smooth motives to the Cisinski-D\'{e}glise category of motives over $S$
\[\rho_S:SmMot_{gm}(S)\to DM_S. \]
In this article, we show that
\begin{thmintro}\label{tf} If $S$ is semi-local and essentially smooth over a field of characteristic zero, then $\rho_S$ is a tensor functor with respect to the tensor structure on $SmMot_{gm}(S)$ defined in Theorem~\ref{int}.
\end{thmintro}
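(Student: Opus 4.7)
My plan is to lift $\rho_S$ to a DG functor $\tilde\rho_S\colon \dg Cor_S \to C(DM_S)$ and verify the tensor-functor axioms at the DG level, since both monoidal structures come from operations on the underlying DG categories. The target admits a symmetric monoidal structure via the standard tensor product of complexes of sheaves with transfers, while the source uses alternating cubical chains on fiber products, as in \cite{tenssmmot}. Because every object of $SmMot_{gm}(S)$ is a retract of a bounded complex in the generators (the motives $M(X)$ for $X$ smooth projective over $S$), it is enough to construct a natural morphism $\tilde\rho_S(A)\otimes\tilde\rho_S(B)\to\tilde\rho_S(A\otimes B)$, prove it is a quasi-isomorphism on pairs of generators, and verify the monoidal coherence conditions.

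On objects, both tensor products send a pair $(X,Y)$ of smooth projective $S$-schemes to the motive of $X\times_S Y$, so the comparison is essentially an identity on generators. The real content is on morphisms. I would introduce an explicit chain map from the alternating cubical external product that defines $\otimes$ on $\dg Cor_S$ into the tensor product of Suslin-type complexes (or an equivalent model) computing morphisms in $DM_S$. On the nose, the cubical construction carries a shuffle-type Eilenberg--Zilber map to the external product of the corresponding normalized chains, and the alternating projector should descend through this map in a coherent way.

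The compatibility of this comparison with the associator, symmetry, and unit constraints should follow formally once the morphism-level comparison is in place. The pseudo-tensor structure in \cite{tenssmmot} is designed to match the naive external product of cubical cycles up to the $\alt$-projection, and all three constraints reduce on generators to the standard constraints on products of smooth projective $S$-schemes. The unit is preserved since $M(S)$ is the monoidal unit on both sides.

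The main technical obstacle, I expect, is showing that the alternating projector is invisible after applying $\tilde\rho_S$, i.e. that the projection $\alt\colon \Cube^n \to \Cube^n_{\alt}$ induces a quasi-isomorphism on the relevant cubical cycle complexes and does so compatibly with the external products on both sides. With $\Q$-coefficients this is automatic and recovers Levine's $\Q$-linear tensor structure from \cite{smmot}; the content of \cite{tenssmmot} is precisely an integral lift of this comparison under the hypothesis that $S$ is semi-local and essentially smooth over a field of characteristic zero. Invoking those results under exactly the hypotheses of Theorem~\ref{int} should yield the desired isomorphism integrally and let us conclude that $\rho_S$ is a tensor functor.
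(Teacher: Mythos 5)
Your overall skeleton --- build a natural comparison morphism $\rho_S(M)\otimes\rho_S(N)\to\rho_S(M\otimes N)$ at the level of the underlying DG categories, show it is an isomorphism when $M,N$ are motives of smooth projective $S$-schemes (where both sides identify with the motive of $X\times_S Y$), and conclude by generation and idempotent completion --- is exactly the paper's strategy. However, your identification of the ``main technical obstacle'' rests on a misreading of which tensor structure Theorem~\ref{int} provides. The structure on $SmMot_{gm}(S)$ from \cite{tenssmmot} is \emph{not} Levine's alternating-cubes construction (that one requires $\Q$-coefficients and is precisely what \cite{tenssmmot} avoids); it is induced by a pseudo-tensor structure on the extended DG category $\dg PrCor_S$, built from the bi-multiplication $\delta^*$ on the co-cubical object. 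Consequently there is no alternating projector whose ``invisibility'' needs to be checked, and the step you flag as the crux is a non-issue, while the steps that actually carry the weight are left unaddressed. Concretely, the paper must: (i) construct the external product $\boxtimes: C^S(X)^m(Y)\otimes C^S(X')^n(Y')\to C^S(X\times_S X')^{m+n}(Y\times_S Y')$ by passing through the representability of $P_2^{\dg Cor_S}(\{Y,Y'\},X\times_S X')$ and the comultiplication maps $\delta^*_{a,b}$, then the projection $\pi$ back to $dgCor_S$; (ii) check that this descends through the canonical left resolutions $\mc{L}(\blank)$, since $\otimes^{tr}_S$ on $Sh^{tr}_{\mathrm{Nis}}(S)$ is only defined as $H_0$ of a tensor product of such resolutions --- your proposal never engages with this, and a naive Eilenberg--Zilber map does not by itself land in $\mc{F}\otimes^{tr}_S\mc{G}$; and (iii) prove a formal result (the paper's Proposition~\ref{prop:homtens}) that a pseudo-tensor DG functor inducing a lax tensor functor on $H^0$ propagates to the pretriangulated hulls, which is what lets one pass from generators in $\mathbf{Proj}_S$ to all of $\mc{K}^b(dgPrCor_S)$.

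A second, smaller gap: you assert that on generators ``the comparison is essentially an identity,'' but you must verify that the \emph{specific} lax structure map you construct realizes the isomorphism $m_S(X)\otimes_S m_S(Y)\cong m_S(X\times_S Y)$. The paper does this by routing through the category $ChMot^\eff(S)$ of Chow motives and the fully faithful embedding $\psi^\eff_S$, using $c_S(X)\otimes c_S(Y)\iso c_S(X\times_S Y)$ together with $m_S=\rho^\eff_S\circ M_S$. Finally, note that the passage from $\rho^\eff_S$ to $\rho_S$ requires the standard argument that a tensor functor sending $\blank\otimes\mathbb{L}$ to an invertible endomorphism extends canonically after inverting the Lefschetz motive; this is routine but should be stated.
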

This is done by first defining a morphism $\rho_S(M)\otimes\rho_S(N)\to \rho_S(M\otimes N)$ in $DM_S$, for objects $M,N$ in  $SmMot_{gm}(S)$, and then showing that this is an isomorphism for $M=M_S(X)$ and $N=M_S(Y)$, where $X,Y$ are smooth projective schemes and $M_S$ is the functor $\mathbf{Proj}_S\to SmMot_{gm}(S)$. We also show that
\begin{corintro}
Under the same conditions on $S$, we have an exact duality
\[ SmMot_{gm}^{op}(S)\to SmMot_{gm}(S). \]
\end{corintro}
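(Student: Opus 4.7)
The strategy is to transport the Poincar\'{e}-type duality available for smooth projective motives in $DM_S$ back along the fully faithful tensor functor $\rho_S$ provided by Theorem~\ref{tf}.

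First, recall that in $DM_S$ (Cisinski--D\'{e}glise), for any smooth projective $X/S$ of relative dimension $d_X$ the motive $M_S(X)$ is strongly dualizable, with dual canonically isomorphic to $M_S(X)(-d_X)[-2d_X]$; the evaluation and coevaluation morphisms are built from the diagonal and from the relative fundamental class of $X\times_S X$. Since the Tate object is already present in $SmMot_{gm}(S)$ (as a summand of the shifted motive of $\mathbb{P}^1_S$), the putative dual $M_S(X)(-d_X)[-2d_X]$ lies inside $SmMot_{gm}(S)$.

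Next, I would pull this duality across $\rho_S$. Because $\rho_S$ is a tensor functor by Theorem~\ref{tf}, both the evaluation $\rho_S(M_S(X))\otimes\rho_S(M_S(X)^{\vee})\to\mathbf{1}$ and the coevaluation morphism in $DM_S$ can be rewritten as $\rho_S$ applied to morphisms between objects of $SmMot_{gm}(S)$; since $\rho_S$ is fully faithful, the corresponding morphisms in $SmMot_{gm}(S)$ exist and are unique, and the triangle identities, being equalities of compositions in the image of $\rho_S$, descend to $SmMot_{gm}(S)$ by the same argument. Hence each generator $M_S(X)$ admits a strong dual already in $SmMot_{gm}(S)$.

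Finally, the full subcategory of strongly dualizable objects of a symmetric monoidal triangulated category is triangulated, stable under tensor products, and stable under direct summands. Since $SmMot_{gm}(S)$ is, by construction, the smallest such subcategory of itself containing the motives $M_S(X)$ of smooth projective $S$-schemes, every object is rigid, and the contravariant assignment $M\mapsto M^{\vee}$ provides the desired exact duality $SmMot_{gm}^{op}(S)\to SmMot_{gm}(S)$. The essential obstacle is the descent of the duality data across $\rho_S$: it is precisely here that the tensor-functor property of Theorem~\ref{tf} is indispensable, and without it one would only recover this duality after passing to $\mathbb{Q}$-coefficients, as in Levine's original construction.
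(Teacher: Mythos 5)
Your proof is correct and follows essentially the same route as the paper: identify the dual of $m_S(X)$ in $DM(S)$ as $m_S(X)(-d)[-2d]$, use that $\rho_S$ is a fully faithful tensor functor to transport the duality data back to $SmMot_{gm}(S)$ where the candidate dual $X\otimes\mathbb{L}^{-d}$ already lives, and extend from the generators to the whole category. You are somewhat more explicit than the paper about descending the evaluation/coevaluation morphisms and about the thickness of the subcategory of rigid objects, but the argument is the same.
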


We begin with proving some general results on pseudo-tensor DG categories and pseudo-tensor functors, and then prove Theorem~\ref{tf} over the next two sections.

\section{Generalities on pseudo-tensor categories}
In this section, we extend some results from \cite[\S~2.3]{tenssmmot}. Let $(\mc{A}, P^\mc{A})$ and $(\mc{B}, P^\mc{B})$ be pseudo-tensor DG categories.
\begin{dfn}
A DG functor $X:\mc{A}\to\mc{B}$ is called a pseudo-tensor functor if $X$ induces a map of complexes
\[P^\mc{A}_I(\{E_i\},F)^*\longrightarrow P^\mc{B}_I(\{X(E_i)\},X(F))^*. \]
which is compatible with composition as given by the following commutative diagram:
\begin{equation}
\label{eq:pstfunc}
\begin{xy}
\xymatrix{ P^\mathcal{A}_I(\{F_i\},G)^*\otimes\bigotimes_{i\in I}P^\mathcal{A}_{J_i}(\{E_j\}_{j\in J_i},F_i)^*\ar[d]\ar[r] & P^\mathcal{A}_J(\{E_j\},G)^*\ar[d]\\
P^\mathcal{B}_I(\{X(F_i)\},X(G))^*\otimes\bigotimes_{i\in I}P^\mathcal{B}_{J_i}(\{X(E_j)\}_{j\in J_i},X(F_i))^*\ar[r] & P^\mathcal{B}_J(\{X(E_j)\},X(G))^*
}
\end{xy}\end{equation} and $X(\id_{E})=\id_{X(E)}$.

Furthermore, let $(\mc{A}, P^\mc{A})$ and $(\mc{B}, P^\mc{B})$ be {\em homotopy tensor categories}, that is, their pseudo-tensor structure induce a tensor structure on their respective homotopy categories. $X$ would be called a {\em homotopy
tensor functor}, if, in addition, $H^0X:H^0\mc{A}\to H^0\mc{B}$ is a tensor functor with respect to the tensor structures on $H^0\mc{A}$ and $H^0\mc{B}$ induced by $P^\mc{A}$ and $P^\mc{B}$ respectively.
\end{dfn}

\begin{dfn}
A functor $X:\mc{A}\to \mc{B}$ between tensor categories $\mc{A}$ and $\mc{B}$ is called a {\em lax tensor functor}, if for every $E,F\in ob(\mc{A})$, there is a morphism \[
X(E)\otimes_{\mc{B}}X(F) \longby{\theta_{E,F}} X(E\otimes_{\mc{A}}F)\]
in $\mc{B}$, which is associative, commutative and unital (see \cite[Part II, Chap.~1, \S 1.3.7]{Mxmot}. In \cite{Mxmot}, Levine calls such a functor a pseudo-tensor functor -- we would not use this terminology to avoid confusion).
\end{dfn}

\begin{rem}
Note that a pseudo-tensor functor between two tensor categories (with pseudo-tensor structures induced by the respective tensor structures) is a lax tensor functor. \end{rem}
Thus, if $(\mc{A}, P^\mc{A})$ and $(\mc{B}, P^\mc{B})$ are homotopy tensor categories and $X:\mc{A}\to\mc{B}$ a pseudo-tensor DG functor, 
then $H^0X:H^0\mc{A}\to H^0\mc{B}$ is a { lax tensor functor}.

The set of small homotopy tensor categories form a category whose morphisms are homotopy tensor functors. It is shown in \cite[\S~2.3]{tenssmmot} that if $\mc{A}$ is a homotopy tensor category, so is $\mc{A}^\pt$. Here, we are going to show that the assignment $\mc{A}\mapsto\mc{A}^\pt$ is an endofunctor of the category of small homotopy tensor categories.
\begin{prop}\label{prop:homtens}
{\it i.} If $X:\mc{A}\to\mc{B}$ is a pseudo-tensor DG functor between pseudo-tensor DG categories $\mc{A}$ and $\mc{B}$, then there is an induced pseudo-tensor DG functor
$X^\pt:\mc{A}^\pt\to\mc{B}^\pt$.\\

\noindent {\it ii.} Further, if $\mc{A}$ and $\mc{B}$ are homotopy tensor categories, then $H^0X^\pt:H^0\mc{A}^\pt\to H^0\mc{B}^\pt$ is a lax tensor functor.\\

\noindent {\it iii.} Also, if in addition, $X:\mc{A}\to\mc{B}$ is a homotopy tensor functor, so is $X^\pt:\mc{A}^\pt\to\mc{B}^\pt$.
\end{prop}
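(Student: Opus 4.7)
The plan is to address the three parts in order, treating part (i) as the main content and parts (ii) and (iii) as formal corollaries. Recall from \cite[\S~2.3]{tenssmmot} that an object of $\mc{A}^\pt$ is a pre-triangulated (twisted) complex: a finite formal direct sum $\bigoplus_i E_i[n_i]$ of shifts of objects in $\mc{A}$, equipped with a differential $\delta$ whose off-diagonal components are morphisms in $\mc{A}$ satisfying a Maurer--Cartan-type equation; the pseudo-tensor operations $P^{\mc{A}^\pt}$ are assembled from those of $\mc{A}$ by a total-complex recipe applied summand-by-summand, with Koszul signs provided by the shifts.

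For part (i), I would define $X^\pt$ on objects by applying $X$ componentwise, so that $X^\pt(\bigoplus E_i[n_i],\delta) = (\bigoplus X(E_i)[n_i], X(\delta))$; the Maurer--Cartan equation is preserved because $X$ is a DG functor. On pseudo-tensor operations, an element of $P^{\mc{A}^\pt}_I(\{F^k\},G)^*$ is an indexed family of elements of $P^{\mc{A}}_I$ between the underlying summands of the $F^k$ and $G$; pushing each such component through the map $P^\mc{A}_I(\ldots)\to P^\mc{B}_I(\ldots)$ supplied by the pseudo-tensor structure on $X$ defines $X^\pt$ on operations. Compatibility of $X^\pt$ with composition, i.e.\ the analogue of diagram (\ref{eq:pstfunc}) for $X^\pt$, reduces summand-by-summand to diagram (\ref{eq:pstfunc}) for $X$ itself, and $X^\pt(\id)=\id$ is immediate.

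For part (ii), once $X^\pt$ has been shown to be a pseudo-tensor DG functor, the remark preceding the proposition together with the result of \cite{tenssmmot} that $\mc{A}^\pt$ and $\mc{B}^\pt$ are again homotopy tensor categories implies that $H^0X^\pt$ is automatically a lax tensor functor. For part (iii), the extra hypothesis is that the lax comparison morphism $X(E)\otimes X(F)\to X(E\otimes F)$ is an isomorphism in $H^0\mc{B}$ for all $E,F$; I would verify that the corresponding comparison morphism for $X^\pt$ on twisted complexes $(E^\bt,\delta_E)$ and $(F^\bt,\delta_F)$ is built summand-by-summand from the comparison morphism for $X$, and is hence a componentwise homotopy equivalence in $\mc{B}^\pt$, which is the same as an isomorphism in $H^0\mc{B}^\pt$.

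The hard part will be book-keeping in part (i): one must check that the pseudo-tensor operations on $\mc{A}^\pt$ really are the componentwise totalization of those on $\mc{A}$, including the Koszul signs attached to the shifts $[n_i]$, so that the compatibility diagram (\ref{eq:pstfunc}) for $X^\pt$ factors summandwise through the one for $X$. This is purely formal given that $X$ is a DG functor (shifts and Koszul signs are respected automatically), but making it precise requires unpacking the construction of $P^{\mc{A}^\pt}$ from $P^\mc{A}$ with sufficient care to see that no additional structure on $X$ beyond what is already assumed is needed.
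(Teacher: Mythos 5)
Your part (i) coincides with the paper's argument: $X^\pt$ is defined componentwise, and since $P^{\mc{A}^\pt}_I(\{\mc{E}^i\},\mc{F})$ is \emph{by definition} the sum $\bigoplus_{j,k}P^\mc{A}_I(\{E^i_j\},F_k)[j-k]$, the operation map and the compatibility square reduce summandwise to those for $X$; contrary to your closing paragraph, there is essentially no hidden bookkeeping at this stage. Your part (ii) takes a genuinely different and cleaner route than the paper: you apply the general principle (recorded as a remark before the proposition) that a pseudo-tensor DG functor between homotopy tensor categories induces a lax tensor functor on $H^0$, to $X^\pt:\mc{A}^\pt\to\mc{B}^\pt$. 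This is valid under the stated hypotheses, though it forfeits the refinement of Remark~\ref{rem:lax} (that for (ii) one only needs $X$ to be a DG functor with $H^0X$ lax tensor), and it produces the comparison morphism only abstractly, via representability, which matters for what follows.

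The genuine gap is in part (iii). You assert that the comparison morphism $X^\pt(\mc{E})\otimes X^\pt(\mc{F})\to X^\pt(\mc{E}\otimes\mc{F})$ is ``built summand-by-summand'' from the maps $X(E_i)\otimes X(F_j)\to X(E_i\otimes F_j)$, but this is exactly what must be proved, and it is the main technical content of the paper's proof. The tensor product on $H^0\mc{A}^\pt$ is defined only by representability of $\mc{G}\mapsto H^0P^{\mc{A}^\pt}_I(\{\mc{E},\mc{F}\},\mc{G})$, and because the differential of $P^{\mc{A}^\pt}$ mixes the summands $P^\mc{A}_I(\{E_j,F_k\},G_m)$, one cannot identify the representing object by taking $H^0$ termwise. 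The paper establishes the required identification as a separate Claim: $\mc{E}\otimes\mc{F}$ is isomorphic in $H^0\mc{A}^\pt$ to the total twisted complex with terms $\oplus_{i+j=l}E_i\otimes F_j$ and explicit signed differentials, proved by induction over the generation of $\mc{A}^\pt$ from $i(\mc{A})$ by shifts and cones, using the compatibility of $\otimes$ with cones from \cite[Proposition~6.1]{tenssmmot}. Only after this identification, applied in both $\mc{A}^\pt$ and $\mc{B}^\pt$, do $X^\pt(\mc{E})\otimes X^\pt(\mc{F})$ and $X^\pt(\mc{E}\otimes\mc{F})$ acquire matching componentwise descriptions, so that the diagonal map assembled from the $\theta_{E_i,F_j}$ is a morphism of twisted complexes and, when each $\theta_{E_i,F_j}$ is an isomorphism in $H^0\mc{B}$, an isomorphism in $H^0\mc{B}^\pt$. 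Your proposal needs this Claim (or an equivalent) to be stated and proved, and you must also check that the abstract comparison map from your proof of (ii) agrees with the componentwise one.
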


\begin{proof}
{\it i.} Given a DG functor $X:\mc{A}\to\mc{B}$, we define a functor $X^\pt:\text{Pre-Tr}(\mc{A})\to\text{Pre-Tr}(\mc{B})$ as follows:\\
Let $\mc{E}=\{E_i,\,e_{ij}:E_j\to E_i\}$ be an object in $\text{Pre-Tr}(\mc{A})$. We define $X^\pt(\mc{E}):={\{X(E_i),\,X(e_{ij}):X(E_j)\to X(E_i)\}}$ and for a morphism $\phi:\mc{E}\to\mc{F}$ in $\text{Pre-Tr}(\mc{A})$, $X(\phi):={\{X(\phi_{ij}):X(E_j)\to X(F_i)\}}$. $X^\pt$ is a DG functor since so is $X$. Clearly, by definition, we have
\[ X^\pt(\mc{E}[n])=X^\pt(\mc{E})[n];\quad X^\pt(Cone(\phi))=Cone(X^\pt(\phi)). \]
Thus, restricting to $\mc{A}^\pt$, we get a functor \[ X^\pt:\mc{A}^\pt\to\mc{B}^\pt. \]

Now, for objects $\mc{E}^1,\ldots,\mc{E}^n,\mc{F}$ in $\mc{A}^\pt$, and $I=\{1,\ldots,n\}$, the pseudo-tensor structure on $\mc{A}^\pt$ is defined as:
\[ P^{\mc{A}^\pt}_I(\{\mc{E}^i\},\mc{F}):=\bigoplus_{j,k} P^\mc{A}_I(\{E^i_j\},F_k)[j-k]. \]
Thus, since $X$ is a pseudo-tensor functor, we have the map
\[
\begin{xy}\xymatrix{ P^{\mc{A}^\pt}_I(\{\mc{E}^i\},\mc{F}) \ar@{=}[d] \ar@{-->}[r] & P^{\mc{B}^\pt}_I(\{X^\pt(\mc{E}^i)\},X^\pt(\mc{F})) \ar@{=}[d] \\
\bigoplus_{j,k} P^\mc{A}_I(\{E^i_j\},F_k)[j-k] \ar[r] & \bigoplus_{j,k} P^\mc{B}_I(\{X(E^i_j)\},X(F_k))[j-k]}
\end{xy}
\]
It also follows that the required commutativity of the square \eqref{eq:pstfunc} is satisfied, thus proving {\em i}.

{\it ii.} Let $\mc{E},\,\mc{F}\in Ob(H^0\mc{A}^\pt)$. We first show that
\begin{clm} We have the following isomorphism in $H^0\mc{A}^\pt$:
\[\mathcal{E}\otimes\mc{F}\iso\{(\oplus_{i+j=l}E_i\otimes F_j)_l,g_{kl}:\oplus_{i+j=l}E_i\otimes F_j\to\oplus_{i+j=k}E_i\otimes F_j\}
\] where $g_{kl}:=\oplus_i\left((-1)^{(l-k+1)(l-i)}e_{(i+k-l),i}\otimes\id_{F_{l-i}} + (-1)^i\id_{E_i}\otimes f_{(k-i),(l-i)}\right)$.
\end{clm}
\begin{proof}[Proof of Claim]
We make use of the fact that $\mc{A}^\pt$ is generated by $i(\mc{A})$ by taking translations and cones and proceed by  induction. For objects $E,F$ in $\mc{A}$, and any $\mc{G}$ in $\mc{A}^\pt$, we have
\begin{align*}
Hom_{H^0\mc{A}^\pt}(i(E)\otimes i(F),\mc{G})& \iso H^0P^{\mc{A}^\pt}(\{i(E),i(F)\},\mc{G})\\
& \iso \bigoplus_kH^0P^\mc{A}(\{E,F\},G_k)[-k]\\
& \iso \bigoplus_kH^0Hom_\mc{A}(E\otimes F,G_k)[-k]\\
& \iso Hom_{H^0\mc{A}^\pt}(i(E\otimes F),\mc{G})
\end{align*}
This implies $i(E)\otimes i(F)\iso i(E\otimes F)$ in $H^0\mc{A}^\pt$. Thus, $(i(E)\otimes i(F))_0=E\otimes F$ and $(i(E)\otimes i(F))_l=0$ for $l\neq 0$. Also $(i(E)\otimes i(F))_{ij}=0$.

Also, note that it follows from the definition of the pseudo-tensor structure on $\mc{A}^\pt$, $\mc{E}[n]\otimes\mc{F}\iso(\mc{E}\otimes\mc{F})[n]$ and $\mc{E}\otimes\mc{F}[n]\iso(\mc{E}\otimes\mc{F})[n]$ in $H^0\mc{A}^\pt$. Now, we show that for objects $\mc{E},\mc{F},\mc{G}$ and a morphism $\phi\in Z^0Hom_{\mc{A}^\pt}(\mc{F},\mc{G})$, if the claim is true for $\mc{E}\otimes\mc{F}$ and $\mc{E}\otimes\mc{G}$, then it is true for 
$\mc{E}\otimes Cone(\phi)$. That is, we have
\begin{eqnarray*}
(\mc{E}\otimes\mc{F})_l &\iso & \bigoplus_{i+j=l}E_i\otimes F_j\\
(\mc{E}\otimes\mc{G})_k &\iso & \bigoplus_{i+p=k}E_i\otimes G_k
\end{eqnarray*}
We have from \cite[Proposition~6.1]{tenssmmot} that in $H^0\mc{A}^\pt$, $\mc{E}\otimes Cone(\phi)\iso Cone(\mc{E}\otimes\mc{F}\to\mc{E}\otimes\mc{G})$. Thus,
\begin{eqnarray*}
(\mc{E}\otimes Cone(\phi))_l &=& (\mc{E}\otimes\mc{G})_l\oplus (\mc{E}\otimes\mc{F})_{l+1}\\
&=& (\oplus_{i+j=l}E_i\otimes F_j)\oplus(\oplus_{i+p=l+1}E_i\otimes G_p)\\
&=& \oplus_{i+k=l}E_i\otimes Cone(\phi)_{l-i}
\end{eqnarray*}
Also, $(\mc{E}\otimes Cone(\phi))_{kl}=cone(\mc{E}\otimes\mc{F}\to\mc{E}\otimes\mc{G})_{kl}=$
\[ \left(\begin{array}{lr}
\oplus_i\big((-1)^{(l-k+1)(l-i)}e_{(i+k-l)\,i}\otimes\id_{G_{l-i}}+(-1)^i\id_{E_i}\otimes g_{(k-i)\,(l-i)}\big) & \hspace*{-7.5cm} \oplus_i(-1)^i\phi^*_{(k-i)\,(l-i+1)}\\
0 & \hspace*{-7.5cm}\oplus_i\big((-1)^{(l-k+1)(l-i+1)}e_{(i+k-l)\,i}\otimes\id_{F_{l-i+1}}+(-1)^i\id_{E_i}\otimes f_{(k-i+1)\,(l-i+1)}\big)
\end{array}\right) \]
which is equal to 
\[ \bigoplus_i\Big[(-1)^{(l-k+1)(l-i)}e_{(i+k-l)\,i}\otimes\id_{Cone(\phi)_{l-i}}
+(-1)^i\id_{E_i}\otimes \left(\begin{array}{cc}g_{(k-i)\,(l-i)}& \phi_{(k-i)\,(l-i+1)}\\0& f_{(k-i+1)\,(l-i+1)}\end{array}\right)\Big] \]
This proves the claim.
\end{proof}
Thus, we have in $H^0\mc{B}^\pt$, \begin{eqnarray*}
X^\pt(\mc{E}\otimes\mc{F})&=&\{(\oplus_{i+j=l}X(E_i\otimes F_j))_l;\\
&&{\ds\oplus_i\left((-1)^{(l-k+1)(l-i)}X(e_{(i+k-l),i}\otimes\id_{F_{l-i}})\right.}\\
&& \left. + (-1)^iX(\id_{E_i}\otimes f_{(k-i),(l-i)})\right)\}\\
\text{and\hspace*{3cm}}&& \\
X^\pt(\mc{E})\otimes X^\pt(\mc{F})&=&\{(\oplus_{i+j=l}X(E_i)\otimes X(F_j))_l;\\
&&{\ds\oplus_i\left((-1)^{(l-k+1)(l-i)}X(e_{(i+k-l),i})\otimes\id_{X(F_{l-i})}\right.}\\
&& \left. + (-1)^i\id_{X(E_i)}\otimes X(f_{(k-i),(l-i)})\right)\}
\end{eqnarray*}
Note that, for objects $\mc{G}:=\{{G_i},g_{ij}:G_j\to G_i\},\,\mc{H}:=\{H_n,h_{mn}:h_n\to h_m\}$ in $\mc{B}^\pt$, if we have maps $\phi_{ii}:G_i\to H_i$ in $H^0\mc{B}$ such that $\phi_{ii}g_{ij}=h_{ij}\phi_{jj}$, then we get
a morphism $\phi:=\{\phi_{ij}\}:\mc{G}\to\mc{H}$ in $H^0\mc{B}^\pt$ defined as $\phi_{ij}=\phi_{ii}$ if $i=j$ and $0$ otherwise. 
Since $H^0X$ is a lax tensor functor, it follows from above that there is a map $\theta:X^\pt(\mc{E})\otimes X^\pt(\mc{F})\to X^\pt(\mc{E}\otimes\mc{F})$ in $H^0\mc{B}^\pt$, that is $H^0X^\pt$ is also a lax tensor functor.

{\it iii.} 
If $X$ is a homotopy tensor functor, by the argument above,  \[X^\pt(\mc{E})\otimes X^\pt(\mc{F})\iso X^\pt(\mc{E}\otimes\mc{F})\] in $H^0\mc{B}^\pt$. Indeed, if the $\phi_{ii}:E_i\to F_i$ are isomorphisms in $H^0\mc{B}$, then so is $\phi:\mc{E}\to\mc{F}$ in $H^0\mc{B}^\pt$.
\end{proof}

\begin{rem}\label{rem:lax}
Note that in the proof of Proposition~\ref{prop:homtens}{\it ii.}, $X$ need not be a pseudo-tensor functor, we only need that $X$ is a DG functor and $H^0X$ is a lax tensor functor.
\end{rem}

\section{The  functor $\mc{K}^b(dgPrCor_S)\to DM^{\eff}(S)$ is lax tensor}
In \cite[\S~6.3]{smmot}, Levine constructs an exact functor 
\[ \rho^\mathrm{eff}_S: SmMot^\eff_{gm}(S)\to DM^\eff(S). \]
The aim in this section is to show that when $S$ is regular semi-local and essentially smooth over
a field of characteristic zero,
 $\rho^\eff_S$ is a lax tensor functor.
 
 Recall from \cite[Lemma~1.9]{smmot} that we can associate a DG category $dg\mc{C}$ to a tensor category $(\mc{C},\otimes)$ with a co-cubical object $\square^*$ with co-multiplication $\delta^*$.
 $dg\mc{C}$ has the same objects as $\mc{C}$, and for objects $X,Y$ in $dg\mc{C}$, $Hom_{dg\mc{C}}(X,Y)^*$
is the non-degenerate complex associated with the cubical abelian group
\[ \ub{n}\mapsto Hom_\mathcal{C}(X\otimes\square^{n},Y). \]
i.e.\  \[ Hom_{dg\mc{C}}(X,Y)^n:= Hom_\mathcal{C}(X\otimes\square^{-n},Y)/\mathrm{degn}. \] (Also see \cite[\S~1.2.5]{tenssmmot}.) 

We also have the extended DG category $\dg\mathcal{C}$, with the same objects as $\mc{C}$. The Hom complexes are defined as follows: For each $m$, we have the non-degenerate complex $Hom_{dg\mathcal{C}}(X,Y\otimes\square^m)^*$; let 
$Hom_{\mc{C}}(X,Y\otimes\square^m)^*_0$ be the subcomplex consisting of $f$   such that
\[
p_{m,i}\circ f=0\in Hom_{\mc{C}}(X,Y\otimes\square^{m-1})^*;\quad i=1,\ldots, m.
\]
Then,
\[
Hom_{\dg\mathcal{C}}(X,Y)^p:=\prod_{m-n=p}Hom_{dg\mc{C}}(X,Y\otimes\square^m)^{-n}_0.
\]
See \cite[\S~1.3.3]{tenssmmot} for details. It is shown in \cite[Theorem~2.4.2]{tenssmmot} that there is a homotopy tensor structure on $\dg\mc{C}$, when $\square^*$ admits bi-multiplication and is homotopy invariant. 
 
Let us assume that $S$ is semi-local and essentially smooth over
a field of characteristic zero. From \cite[Remark~1.4.4]{tenssmmot}, we have that $SmMot^\eff_{gm}(S)$ is equivalent to $\mc{K}^b(dgPrCor_S)^\natural$ where $^\natural$ denotes the idempotent completion. We describe below the construction of Levine in \cite[\hbox{\it loc. cit}]{smmot} to give a functor
\[ \rho^\mathrm{eff}_S: SmMot^\eff_{gm}(S)\to  DM^\eff(S). \]

We also recall from \cite[\S~5.1]{smmot} that for $X, Y$ in $\sm/S$, the cubical Suslin complex $C^S(Y,r)^*(X)$ is the complex associated to the cubical object \[ n\mapsto z^S_{equi}(Y,r)(X\times\square^n_S), \]
where $\square^n_S\simeq\mathbb{A}^n_S$ and $z^S_{equi}(Y,r)(X)$ is the free abelian group on the integral subschemes $W\subset X\times_S Y$
such that the projection $W\to X$ dominates an irreducible component $X'$ of $X$, and such that, for each $x\in X$, the fiber $W_x$ over $x$
has pure dimension $r$ over $k(x)$, or is empty. If $X$ is in $\mathbf{Proj}/S$, then $C^S(X,0)$ is the presheaf $Hom_{dgCor_S}(\blank,X)^*$ on
$\sm/S$.
Sending $U$ in $\sm/S$ to $C^S(X,0)(U)$ gives the object $C^S(X)$ in $C^-(Sh^{tr}_{\mathrm{Nis}}(S))$.

For $X, Y$ in $\mathbf{Proj}/S$, the composition law in $dgCor_S$ defines a natural map
\[ \tilde{\rho}_{X,Y}:Hom_{dgPrCor_S}(X,Y)^*\to Hom_{C(Sh^{tr}_{\mathrm{Nis}}(S))}(C^S(X),C^S(Y)). \]
This defines a DG functor
\[ \tilde{\rho}:dgPrCor_S\to C(Sh^{tr}_{\mathrm{Nis}}(S)).\]
Applying the functor $\mc{K}^b$ and composing with the total complex functor
\[ \Tot:\mc{K}^b(C(Sh^{tr}_{\mathrm{Nis}}(S)))\to K(Sh^{tr}_{\mathrm{Nis}}(S))\]
and the quotient functor
\[ K(Sh^{tr}_{\mathrm{Nis}}(S))\to DM^\eff(S) \]
gives us the exact functor
\[ \mc{K}^b(dgPrCor_S)\to DM^\eff(S). \]
Extending canonically to the idempotent completion, we define the exact functor
\[ \rho^\mathrm{eff}_S: SmMot^\eff_{gm}(S)\to  DM^\eff(S). \]

We also have a map (see \cite[Proposition~1.3.5]{tenssmmot})
\[ \pi_{X,Y}: Hom_{\dg PrCor_S}(X,Y)^* \to Hom_{dgPrCor_S}(X,Y)^* \]
giving us a functor of DG categories 
\[ \hat{\rho}:=\tilde{\rho}\circ\pi: \dg PrCor_S \to C(Sh^{tr}_{\mathrm{Nis}}(S)). \]

We will first show that 
\begin{prop}\label{prop:pstensf}
$\hat{\rho}$ is a pseudo-tensor functor with respect to the pseudo-tensor structure on $\dg PrCor_S$ defined in \cite{tenssmmot} and one induced from the tensor structure $\otimes^{tr}_S$  on $C(Sh^{tr}_{\mathrm{Nis}}(S))$. \end{prop}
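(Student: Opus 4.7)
The plan is to produce, for every finite set $I$ and collection $\{X_i\}_{i\in I}, Y$ of objects of $\dg PrCor_S$, a map of complexes
\[
P^{\dg PrCor_S}_I(\{X_i\},Y)^* \longrightarrow Hom_{C(Sh^{tr}_{\mathrm{Nis}}(S))}\bigl(\hat\rho(X_1)\otimes^{tr}_S\cdots\otimes^{tr}_S\hat\rho(X_n),\,\hat\rho(Y)\bigr)^*,
\]
and to check that it is compatible with composition and units. I would start by unwinding the two pseudo-tensor structures: the one on $\dg PrCor_S$ from \cite{tenssmmot} is built out of polylinear cycles on products $X_1\times_S\cdots\times_S X_n\times_S\square^m\times_S Y$ satisfying the usual equidimensionality and face-vanishing conditions, and the one on $C(Sh^{tr}_{\mathrm{Nis}}(S))$ induced from $\otimes^{tr}_S$ is simply $P_I(\{F_i\},G)^* = Hom(F_1\otimes^{tr}_S\cdots\otimes^{tr}_S F_n,G)^*$.

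The key fact that makes the construction possible is the identification $L_{tr}(X)\otimes^{tr}_S L_{tr}(Y)\simeq L_{tr}(X\times_S Y)$, and more generally $C^S(X)\otimes^{tr}_S C^S(Y)\simeq C^S(X\times_S Y)$ in $C(Sh^{tr}_{\mathrm{Nis}}(S))$ up to the standard Suslin-complex normalization. Using this, a polylinear cycle $W\subset X_1\times_S\cdots\times_S X_n\times_S\square^m\times_S Y$ may be reinterpreted as a single cycle correspondence from $X_1\times_S\cdots\times_S X_n$ to $Y$, and then, via exactly the same composition-with-cycles recipe used to define $\tilde\rho$ in the unary case, gives a morphism of complexes of Nisnevich sheaves with transfers. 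This furnishes the desired map of complexes at the level of $dg PrCor_S$; the passage to $\dg PrCor_S$ is then achieved by pre-composing with the projection $\pi$ of \cite[Proposition~1.3.5]{tenssmmot}, which at the polylinear level sends a cycle with extra $\square^m$-coordinates to its non-degenerate part.

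The three verifications then run in parallel. The differential compatibility follows because the external-product cycle map commutes with the cubical faces. The unit condition $\hat\rho(\id_X)=\id_{\hat\rho(X)}$ is immediate from the unary definition of $\tilde\rho$. The main content is the commutativity of the diagram~\eqref{eq:pstfunc}: one must show that composing polylinear cycles, which amounts to fiber-product of cycles along the intermediate factors together with the cubical composition induced by $\delta^*$, corresponds under the identifications above to the usual composition of morphisms of transfer sheaves after forming external products. This reduces, after a diagram chase, to the associativity and functoriality of the external product of correspondences together with the representable identification $L_{tr}(X\times_S Y)\simeq L_{tr}(X)\otimes^{tr}_S L_{tr}(Y)$. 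The main obstacle is bookkeeping: keeping track of the Koszul signs introduced by the pseudo-tensor composition, the cubical signs from the $\square^m$-coordinates, and the interaction with the degeneracy quotient on the source and with $\pi$ on the extended side, so that the square commutes on the nose rather than only up to homotopy. Once this is checked, $\hat\rho$ is a pseudo-tensor functor.
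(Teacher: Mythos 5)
Your overall strategy is aligned with the paper's: use the representability of the pseudo-tensor structure on $\dg PrCor_S$ to convert a polylinear morphism into a morphism out of the fiber product, push it through $\pi$ and $\tilde{\rho}$, and then compare $C^S(X)\otimes^{tr}_S C^S(Y)$ with $C^S(X\times_S Y)$. The gap is at that comparison step. You assert an isomorphism $C^S(X)\otimes^{tr}_S C^S(Y)\simeq C^S(X\times_S Y)$ in $C(Sh^{tr}_{\mathrm{Nis}}(S))$ ``up to the standard Suslin-complex normalization.'' No such isomorphism is available at the level of complexes of sheaves with transfers: $\otimes^{tr}_S$ is defined via the canonical left resolution $\mc{L}$ by representables and only satisfies $\Z^{tr}_S(X)\otimes^{tr}_S\Z^{tr}_S(X')=\Z^{tr}_S(X\times_S X')$ on representable objects, whereas the Suslin complexes $C^S(X)$ are not representable. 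Indeed, if the isomorphism you invoke held on the nose, $\hat{\rho}$ would already be a strict tensor functor and the entire subsequent section of the paper --- which proves that the lax structure map becomes an isomorphism on $M_S(X)\otimes M_S(Y)$ by a separate argument through Chow motives --- would be superfluous.

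What is actually needed, and suffices for the pseudo-tensor (i.e.\ lax) statement, is only a natural map $\gamma_{X,Y}\colon C^S(X)\otimes^{tr}_S C^S(Y)\to C^S(X\times_S Y)$ in one direction, and constructing it is the real content of the proof, which your write-up elides. Concretely one must: unwind $\mc{L}_0(C^S(X)^m)\otimes^{tr}_S\mc{L}_0(C^S(X')^n)$ as a direct sum of $\Z^{tr}_S(Y\times_S Y')$ indexed by pairs of sections; define an external product $\boxtimes\colon C^S(X)^m(Y)\otimes C^S(X')^n(Y')\to C^S(X\times_S X')^{m+n}(Y\times_S Y')$ using the binary part of the pseudo-tensor structure on $\dg Cor_S$, the comultiplication $\delta^*_{a,b}$ on the cocube, and the projection $\pi$ back to $dgCor_S$; and verify that the resulting map annihilates the image of the degree-one term of the resolution so that it descends to $H_0$, i.e.\ to $\otimes^{tr}_S$. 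Once $\gamma_{X,Y}$ is in hand and seen to be natural in both variables, the compatibility with composition in diagram~\eqref{eq:pstfunc} follows from that naturality alone, which replaces the sign bookkeeping you anticipate as the main obstacle. Your appeal to a two-sided identification hides exactly the point where the direction of the comparison map matters.
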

The tensor structure $\otimes^{tr}_S$ on $Sh^{tr}_{\mathrm{Nis}}(S)$ is defined as follows: we take the canonical left resolution $\mc{L}(\mc{F})$ of a sheaf
$\mc{F}$ by taking the canonical surjection
\[ \mc{L}_0(\mc{F}):=\bigoplus_{\begin{array}{c}\scriptstyle X\in\sm/S \\ \scriptstyle s\in\mc{F}(X)\end{array}}\Z^{tr}_S(X)\longby{\phi_0}\mc{F} \]
where $\Z^{tr}_S(X):=Cor_S(\blank,X)$ is the representable presheaf for $X\in\sm/S$. Setting $\mc{F}_1:=\ker \phi_0$ and iterating, we get the complex 
$\mc{L}(\mc{F})$. Then, we define
\[ \mc{F}\otimes^{tr}_S\mc{G}:=H_0(\mc{L}(\mc{F})\otimes^{tr}_S\mc{L}(\mc{G})) \]
using the fact that
\[ \Z^{tr}_S(X)\otimes^{tr}_S\Z^{tr}_S(X'):=\Z^{tr}_S(X\times_S X'). \]

\begin{lem}
For $X,X'\in \mathbf{Proj}_S$, there exists a map of presheaves
\[ C^S(X)^m\otimes^{tr}_S C^S(X')^n \to C^S(X\times_S X')^{m+n}. \]
\end{lem}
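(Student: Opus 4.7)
The plan is to define the map of presheaves via the external product of cycles.

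First, for $U,V\in\sm/S$, I would consider the $\Z$-bilinear pairing on sections sending cycles $W\in z^S_{equi}(X,0)(U\times_S\square^m_S)$ and $W'\in z^S_{equi}(X',0)(V\times_S\square^n_S)$ to their external product $W\times_S W'$. After reordering factors and using the canonical identification $\square^m_S\times_S\square^n_S\cong\square^{m+n}_S$, this is a cycle on $(U\times_S V)\times_S\square^{m+n}_S\times_S(X\times_S X')$. Equidimensionality of relative dimension zero is preserved by external products: for $X,X'\in\mathbf{Proj}_S$ this is the elementary fact that the external product of finite correspondences is again a finite correspondence. Hence the pairing takes values in $z^S_{equi}(X\times_S X',0)(U\times_S V\times_S\square^{m+n}_S)$.

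Second, I would check that this bilinear pairing descends to the non-degenerate quotients defining the complexes in question. If $W=p_{m,i}^*(W_0)$ is degenerate, then under the identification $\square^m_S\times_S\square^n_S\cong\square^{m+n}_S$, the product $W\times_S W'$ coincides with the pullback of $W_0\times_S W'$ along the projection $\square^{m+n}_S\to\square^{m+n-1}_S$ omitting the $i$-th coordinate, and is thus degenerate; the case where $W'$ is degenerate is symmetric. This yields a natural bilinear pairing
\[
\bar{\mu}_{U,V}\colon C^S(X)^m(U)\otimes_\Z C^S(X')^n(V)\to C^S(X\times_S X')^{m+n}(U\times_S V).
\]

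Third, I would verify that $\bar{\mu}_{U,V}$ is natural in $U$ and $V$ not only with respect to morphisms in $\sm/S$ but also with respect to finite correspondences, so that it defines a bilinear pairing in the category of presheaves with transfers on $\sm/S$. This is essentially the bifunctoriality of the external product of cycles under pullback by correspondences.

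Finally, I would invoke the universal property of $\otimes^{tr}_S$ to extract the desired morphism of presheaves. Since $\otimes^{tr}_S$ is built from the canonical resolutions by sums of representables and satisfies $\Z^{tr}_S(Y)\otimes^{tr}_S\Z^{tr}_S(Y')=\Z^{tr}_S(Y\times_S Y')$, morphisms out of $\mc{F}\otimes^{tr}_S\mc{G}$ correspond, via Yoneda on the representable generators, to bilinear pairings of presheaves with transfers. The main obstacle is this last step: precisely reconciling the external-product pairing with the resolution-theoretic definition of $\otimes^{tr}_S$. This amounts to describing the induced map on $\mc{L}(C^S(X)^m)\otimes^{tr}_S\mc{L}(C^S(X')^n)$ (whose terms are sums of representables on which the external product gives the required pairing by Yoneda), checking compatibility with the differentials of the resolutions, and verifying that the map then factors through the $H_0$ appearing in the definition of $\otimes^{tr}_S$.
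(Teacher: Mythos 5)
Your proposal is correct in outline and shares the paper's overall skeleton: construct an external-product pairing on sections, use the canonical resolution $\mc{L}_0$ by sums of representables $\Z^{tr}_S(Y)$ together with Yoneda to convert that pairing into a map out of $\mc{L}_0(C^S(X)^m)\otimes^{tr}_S\mc{L}_0(C^S(X')^n)$, and then check that the map kills the image of the degree-one term so that it factors through the $H_0$ defining $\otimes^{tr}_S$. The final verification, which you flag as the main obstacle, is exactly the computation the paper writes out (the explicit description of $(\mc{L}(C^S(X)^m)\otimes^{tr}_S\mc{L}(C^S(X')^n))_1$ and the identity $\phi_0^{X\times_S X',m+n}\circ\big((\phi_1^{X,m}\otimes^{tr}_S\id)+(\id\otimes^{tr}_S\phi_1^{X',n})\big)=0$); it is routine and your sketch of it is adequate. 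The genuine divergence is in how the pairing $\boxtimes$ is produced. You define it directly as the external product of equidimensional cycles under $\square^m_S\times_S\square^n_S\cong\square^{m+n}_S$, checking preservation of equidimensionality and degeneracy by hand; this is more elementary and suffices for the lemma as stated. The paper instead routes $\boxtimes$ through the pseudo-tensor machinery of the extended category $\dg Cor_S$: the quasi-equivalence $F$, the representability of $P_2^{\dg Cor_S}$, the comultiplication maps $\delta^*_{a,m+n-a}$, and the projection $\pi$ back to $dgCor_S$. That choice is not cosmetic: the subsequent proof that $\hat{\rho}$ is a pseudo-tensor functor needs the resulting $\gamma_{X,X'}$ to be compatible with the operations $P_2^{\dg PrCor_S}(\{X,Y\},Z)\to Hom_{\dg PrCor_S}(X\times_S Y,Z)$, which is automatic when $\boxtimes$ is defined through $P^{\dg Cor_S}$. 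If you adopt your cycle-theoretic construction, you should add the (easy, but necessary) observation that it agrees with the pairing induced by the pseudo-tensor structure, which holds for the standard cocubical object because $\delta_{a,b}$ is built from the diagonal and coordinate projections.
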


\begin{proof}
Note that 
\[ \mc{L}_0(C^S(X)^m)=\bigoplus_{\begin{array}{c}\scriptstyle Y\in\sm/S \\ \scriptstyle s\in C^S(X)^m(Y)\end{array}}\Z^{tr}_S(Y)
\longby{\;\phi_0^{X,m}}C^S(X)^m \]
where $Cor_S(Y\times_S\square^*_S,X)_0$ is the non-degenerate complex associated to the cubical abelian group $\ub{n}\mapsto Cor_S(Y\times_S\square^n_S,X)$.

Firstly, we want to produce a map of presheaves
\begin{equation}\label{eqn:Lmap}
\mc{L}_0(C^S(X)^m)\otimes^{tr}_S\mc{L}_0(C^S(X')^n) \to \mc{L}_0(C^S(X\times_S X')^{m+n}).
\end{equation}
We have $\mc{L}_0(C^S(X)^m)\otimes^{tr}_S\mc{L}_0(C^S(X')^n) =$ \\
\begin{align*}
 = & \Big( \bigoplus_{\begin{array}{c}\scriptstyle Y\in\sm/S \\ \scriptstyle s\in C^S(X)^m(Y)\end{array}}\Z^{tr}_S(Y)\Big) \otimes^{tr}_S \Big( \bigoplus_{\begin{array}{c}\scriptstyle Y'\in\sm/S \\ \scriptstyle s'\in C^S(X')^n(Y')\end{array}}\Z^{tr}_S(Y')\Big) \\
=& \bigoplus_{\begin{array}{c}\scriptstyle Y,Y'\in\sm/S \\ \scriptstyle s\in C^S(X)^m(Y)\\ \scriptstyle s'\in C^S(X')^n(Y')\end{array}}\Z^{tr}_S(Y\times_SY')
\end{align*}

Using the map \eqref{eqn:cstimes}, we thus get a map
\[ \xymatrix{ \mc{L}_0(C^S(X)^m)\otimes^{tr}_S\mc{L}_0(C^S(X')^n)\ar[r]&  \ds\bigoplus_{\begin{array}{c}\scriptstyle Y\times_SY'\in\sm/S \\ 
\scriptstyle s\boxtimes s'\in C^S(X\times_S X')^{m+n}(Y\times_SY')\end{array}}\Z^{tr}_S(Y\times_SY') \ar@{^{(}->}[d] \\
\mc{L}_0(C^S(X\times_S X')^{m+n}) \ar@{=}[r] & \ds\bigoplus_{\begin{array}{c}\scriptstyle Y''\in\sm/S \\ 
\scriptstyle t\in C^S(X\times_S X')^{m+n}(Y'')\end{array}}\Z^{tr}_S(Y'') } \]

The map \begin{equation}\label{eqn:cstimes} C^S(X)^m(Y)\otimes C^S(X')^n(Y') \by{\boxtimes} C^S(X\times_S X')^{m+n}(Y\times_SY') \end{equation}
is given as follows:
\[ \xymatrix{ Hom_{dgCor_S}(Y,X)^m\otimes Hom_{dgCor_S}(Y',X')^n \ar[d]^{F_{Y,X}\otimes F_{Y',X'}} \\
Hom_{\dg Cor_S}(Y,X)^m\otimes Hom_{\dg Cor_S}(Y',X')^n \ar@{=}[d] \\
P_1^{\dg Cor_S}(\{Y\},X)^m\otimes P_1^{\dg Cor_S}(\{Y'\},X')^n \ar[d]^-{[\text{By representability of the pseudo-tensor structure }P^{\dg Cor_S}]} \\
P_2^{\dg Cor_S}(\{Y,Y'\},X\times_S X')^{m+n} \ar[d]^{\sum_{a=0}^{m+n}\delta^*_{a,m+n-a}} \\
Hom_{\dg Cor_S}(Y\times_S Y',X\times_S X')^{m+n} \ar[d]^{\pi_{Y\times_S Y',X\times_S X'}} \\
Hom_{dgCor_S}(Y\times_S Y',X\times_S X')^{m+n} } \]
Here, $\delta^*_{a,b}$ is induced by the map $\delta_{a,b}$ defined as the composition
\[ \square^{a+b}\longby{\delta^{a+b}}\square^{a+b}\otimes\square^{a+b}\xrightarrow{p^1_{a,b}\otimes p^2_{a,b}}\square^{a}\otimes\square^{b}. \]

Composing the map \eqref{eqn:Lmap} with $\phi_0^{X\times_S X',m+n}$ gives us a map 
\[ \mc{L}_0(C^S(X)^m)\otimes^{tr}_S\mc{L}_0(C^S(X')^n) \to C^S(X\times_S X')^{m+n}. \]
The fact that this induces a map on $H_0(\mc{L}(C^S(X)^m)\otimes^{tr}_S\mc{L}(C^S(X')^n))$ follows from definition, noting that
\begin{multline*}
(\mc{L}(C^S(X)^m)\otimes^{tr}_S\mc{L}(C^S(X')^n))_1=\\
=\Big(\bigoplus_{\begin{array}{c}\scriptstyle Z,Z'\in\sm/S \\ \scriptstyle t\in \ker(\phi_0^{X,m})(Z)\\ \scriptstyle t'\in C^S(X')^n(Z')\end{array}}\Z^{tr}_S(Z\times_SZ')\Big)\oplus\Big(\bigoplus_{\begin{array}{c}\scriptstyle Z,Z'\in\sm/S \\ \scriptstyle t\in C^S(X)^m(Z)\\ \scriptstyle t'\in \ker(\phi_0^{X',n})(Z')\end{array}}\Z^{tr}_S(Z\times_SZ')\Big) \\
\overset{(\phi_1^{X,m}\otimes^{tr}_S\id) + (\id\otimes^{tr}_S\phi_1^{X',n})}{\xrightarrow{\hspace*{3.4cm}}} \bigoplus_{\begin{array}{c}\scriptstyle Y,Y'\in\sm/S \\ \scriptstyle s\in C^S(X)^m(Y)\\ \scriptstyle s'\in C^S(X')^n(Y')\end{array}}\Z^{tr}_S(Y\times_SY')\\
=(\mc{L}(C^S(X)^m)\otimes^{tr}_S\mc{L}(C^S(X')^n))_0
\end{multline*}
and that $\phi_0^{X\times_S X',m+n}\circ\big((\phi_1^{X,m}\otimes^{tr}_S\id) + (\id\otimes^{tr}_S\phi_1^{X',n})\big)=0$. This completes the proof of the lemma.
\end{proof}

\begin{proof}[Proof of Proposition~\ref{prop:pstensf}]
It directly follows from the lemma that we have a map in $C(Sh^{tr}_{\mathrm{Nis}}(S))$, 
\[ C^S(X)\otimes^{tr}_S C^S(X') \longby{\gamma_{X,X'}} C^S(X\times_S X'). \]
Note that $\gamma_{X,X'}$ is natural in $X$ and $X'$, since so is $\boxtimes$ (in \eqref{eqn:cstimes}).
Thus, we have maps, for $X,Y,Z$ in $\mathbf{Proj}_S$,
\[ \xymatrix{ P_2^{\dg PrCor_S}(\{X,Y\},Z) \ar[r] & Hom_{\dg PrCor_S}(X\times_S Y,Z) \ar[d]^{\pi_{X\times_S Y,Z}} \\
& Hom_{dgPrCor_S}(X\times_S Y,Z) \ar[dl]_{\tilde{\rho}_{X\times_S Y,Z}} \\  Hom_{C(Sh^{tr}_{\mathrm{Nis}}(S))}(C^S(X\times_S Y),C^S(Z)) \ar[r]_-{\gamma^*_{X,X'}} & Hom_{C(Sh^{tr}_{\mathrm{Nis}}(S))}(C^S(X)\otimes^{tr}_S C^S(Y),C^S(Z)).} \]

That this map is compatible with composition follows from the fact that $\gamma_{X,X'}$ is natural in $X$ and $X'$. This shows that $\hat{\rho}$ is a pseudo-tensor functor.
\end{proof}

\begin{cor}
If $S$ is regular semi-local and essentially smooth over
a field of characteristic zero, $\rho^\eff_S$ is a lax tensor functor.
\end{cor}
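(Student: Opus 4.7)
The plan is to combine Proposition~\ref{prop:pstensf} with Proposition~\ref{prop:homtens} and then match the resulting lax tensor functor with $\rho^\eff_S$ under the identification of $SmMot^\eff_{gm}(S)$ with $\mc{K}^b(dgPrCor_S)^\natural$ and the description of $DM^\eff(S)$ as a localization of $K(Sh^{tr}_{\mathrm{Nis}}(S))$.

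First, by Proposition~\ref{prop:pstensf}, $\hat{\rho}:\dg PrCor_S\to C(Sh^{tr}_{\mathrm{Nis}}(S))$ is a pseudo-tensor DG functor, the source carrying the pseudo-tensor structure of \cite{tenssmmot} and the target the one coming from $\otimes^{tr}_S$. Under the hypotheses on $S$, both are homotopy tensor DG categories (the source by \cite[Theorem~2.4.2]{tenssmmot}, the target by the standard derived tensor product of sheaves with transfers). Applying Proposition~\ref{prop:homtens}(i) to $\hat{\rho}$ produces a pseudo-tensor DG functor $\hat{\rho}^\pt:(\dg PrCor_S)^\pt\to C(Sh^{tr}_{\mathrm{Nis}}(S))^\pt$, and part (ii) upgrades $H^0\hat{\rho}^\pt$ to a lax tensor functor between the induced tensor homotopy categories.

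The next step is to pass to idempotent completions (under which lax tensor functors extend canonically) and to compose with two standard tensor equivalences. On the source, combining \cite[Remark~1.4.4]{tenssmmot} with the equivalence $\mc{K}^b(\dg PrCor_S)\simeq\mc{K}^b(dgPrCor_S)$ induced by $\pi$ (valid under the semi-local essentially smooth hypothesis) realizes $(H^0(\dg PrCor_S)^\pt)^\natural$ as $SmMot^\eff_{gm}(S)$ with the tensor structure of Theorem~\ref{int}, since this is precisely how the tensor structure is transported in \cite{tenssmmot}. On the target, the total complex functor $\Tot$ followed by the Nisnevich localization $K(Sh^{tr}_{\mathrm{Nis}}(S))\to DM^\eff(S)$ yields an exact tensor functor $(H^0C(Sh^{tr}_{\mathrm{Nis}}(S))^\pt)^\natural\to DM^\eff(S)$, because $\Tot$ converts the pretr tensor product into the usual tensor product of complexes and the tensor structure on $DM^\eff(S)$ is built from $\otimes^{tr}_S$ by construction. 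Composing this tensor functor with $(H^0\hat{\rho}^\pt)^\natural$ produces a lax tensor functor whose underlying exact functor coincides with $\rho^\eff_S$ as constructed in \cite[\S 6.3]{smmot}.

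The main obstacle is the bookkeeping required to verify that the tensor structure on $SmMot^\eff_{gm}(S)$ from Theorem~\ref{int} really matches the one induced on $(H^0(\dg PrCor_S)^\pt)^\natural$ by the pretr pseudo-tensor structure, and analogously that the localization functor on the codomain is compatible with the tensor structures in the required sense. Once these compatibilities are traced through the constructions of \cite{tenssmmot} and \cite{smmot}, the associativity, commutativity and unit axioms for the resulting natural transformation $\rho^\eff_S(M)\otimes\rho^\eff_S(N)\to\rho^\eff_S(M\otimes N)$ follow automatically from Proposition~\ref{prop:homtens}(ii) applied to $\hat{\rho}$.
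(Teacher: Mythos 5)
Your proposal is correct and follows essentially the same route as the paper: Proposition~\ref{prop:pstensf} plus Proposition~\ref{prop:homtens}(ii) under the identification $\mc{K}^b(\mc{A})\simeq H^0\mc{A}^\pt$, transfer along the quasi-equivalence between $dgPrCor_S$ and $\dg PrCor_S$, compatibility of $\Tot$ and the localization to $DM^\eff(S)$ with the tensor structures, and finally passage to idempotent completions. The only cosmetic difference is that the paper phrases the transfer step via the section $F$ with $\pi\circ F=\id$ (so $\tilde{\rho}=\hat{\rho}\circ F$) rather than via $\pi$ directly, which amounts to the same thing.
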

\begin{proof}
Combining Proposition~\ref{prop:pstensf} with Proposition~\ref{prop:homtens}.{\it ii.} and noting that $\mc{K}^b(\mc{A})$ is equivalent to $H^0\mc{A}^\pt$, we get that $\mc{K}^b(\hat{\rho})$ is a lax tensor functor. Recall from \cite[\S~1.3.3]{tenssmmot}, that there is a quasi-equivalence $dgPrCor_S\by{F}\dg PrCor_S$ such that $\pi\circ F=\id$. Thus, $\tilde{\rho}=\hat{\rho}\circ F$. 
Since the tensor structure on the category $\mc{K}^b(dgPrCor_S)$ is induced from $\mc{K}^b(\dg PrCor_S)$ via the equivalence of categories $\mc{K}^b(F)$, we have that $\mc{K}^b(\tilde{\rho})$ is also a lax tensor functor.

Since $\Tot$ is a tensor functor and the tensor structure $\otimes_S$ on $DM^\eff(S)$ is induced from the tensor product $\otimes^{tr}_S$ via the localization map, we get that 
\[ \mc{K}^b(dgPrCor_S)\to DM^\eff(S) \]
is a lax tensor functor. 
Since extending to idempotent completion preserves tensor structure, it follows from definition that $\rho^\eff_S$ is a lax tensor functor.
\end{proof}

\section{$\rho^\eff_S$ is a tensor functor}

Let $\tilde{ChMot}^\eff_S$ be the category with the same objects as $\mathbf{Proj}_S$ and with morphisms (for $X\to S$ of pure dimension $d_X$
over $S$)
\[ Hom_{\tilde{ChMot}^\eff_S}(X,Y):=CH_{d_X}(X\times_S Y), \]
and let ${ChMot}^\eff(S)$ be its idempotent completion. We have the functor 
\[ c_S: \mathbf{Proj}_S \to ChMot^\eff(S) \]
sending a morphism $f:X\to Y$ to the graph of $f$. In \cite{smmot}, Levine showed that
\begin{prop}[\cite{smmot}, Proposition~5.19]\label{prop:chmot} {\it i}. There is a fully faithful embedding $\psi^\eff_S:ChMot^\eff(S)\to SmMot^\eff_{gm}(S)$ such that
there is a commutative diagram \[
\xymatrix{ \mathbf{Proj}_S \ar[rd] \ar[r]^-{c_S} &  ChMot^\eff(S) \ar[d]^{\psi^\eff_S} \\ & SmMot^\eff_{gm}(S) } \]

\noindent {\it ii}. $\psi^\eff$ extends to a fully faithful embedding $\psi:ChMot(S)\to SmMot_{gm}(S)$. \\

\noindent {\it iii}. The maps $\psi^\eff$ and $\psi$ are compatible with the action of $Cor_S$.
\end{prop}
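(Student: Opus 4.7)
My plan is to first define $\psi^\eff_S$ on morphism groups by comparing Chow groups with $H^0$ of the cubical Suslin complex, then verify that this comparison is an isomorphism (for full faithfulness), and finally extend to the non-effective version and check the $Cor_S$-compatibility.

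To construct $\psi^\eff_S$, I would unfold
\[
Hom_{SmMot^\eff_{gm}(S)}(M_S(X),M_S(Y))=H^0\,Hom_{dgPrCor_S}(X,Y)^*
\]
using the equivalence $SmMot^\eff_{gm}(S)\simeq \mc{K}^b(dgPrCor_S)^\natural$. By definition of $dgPrCor_S$, the right-hand side is $H^0$ of the cubical Suslin complex $C^S(Y,0)^*(X)$. For $X$ smooth projective over $S$ of pure relative dimension $d_X$, every cycle on $X\times_S Y$ of dimension $d_X$ is automatically equi-dimensional of relative dimension $0$ over $X$, so there is a natural map
\[
CH_{d_X}(X\times_S Y)\to H^0\,C^S(Y,0)^*(X)
\]
sending a cycle class to its class in degree $0$ of the Suslin complex. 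I set $\psi^\eff_S(X):=M_S(X)$ and define $\psi^\eff_S$ on morphisms via this map. Functoriality reduces to the fact that composition in $ChMot^\eff(S)$ and in $dgPrCor_S$ is given by the same pullback-intersect-pushforward formula on cycles; the commutativity of the triangle with $c_S$ is immediate, since the graph of $f\colon X\to Y$ is already the cycle representing $M_S(f)$.

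The bulk of the work is full faithfulness, i.e.\ showing the above map is an isomorphism. Surjectivity is clear from the definition of the Suslin complex in degree $0$. For injectivity, I need to show that a cycle $W$ on $X\times_S Y$ arising as the Suslin differential of an equi-dimensional cycle on $X\times_S Y\times_S\square^1$ is rationally equivalent to zero, and, conversely, that every rational equivalence on $X\times_S Y$ can be realized by such a $\square^1$-family. This is where the hypothesis on $S$ enters: for $S$ semi-local and essentially smooth over a field of characteristic zero, a moving lemma for equi-dimensional cycles (combined with resolution of singularities) identifies the Suslin boundaries with the boundaries of the cubical Bloch higher Chow complex, after which the identification $CH^{d_Y}(X\times_S Y,0)=CH_{d_X}(X\times_S Y)$ closes the argument. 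This moving-lemma step is the main obstacle; everything else is formal.

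Part {\it ii} then follows by inverting the Lefschetz/Tate object on both sides: the decomposition of $M_S(\mb{P}^1_S)$ is parallel in $ChMot(S)$ and in $SmMot_{gm}(S)$, so $\psi^\eff$ extends canonically to a fully faithful $\psi$. For part {\it iii}, since $\psi$ is the identity on underlying cycle groups and the $Cor_S$-action on each category is defined by the same cycle-theoretic push-pull along a finite correspondence, the compatibility is automatic.
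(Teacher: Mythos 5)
First, a point of order: the paper does not prove this proposition at all. It is imported verbatim from Levine (\cite{smmot}, Proposition~5.19) and used as a black box, so there is no in-paper proof to compare your attempt against. Judged on its own, your outline does follow the general shape of the argument one would give (identify $Hom_{SmMot^\eff_{gm}(S)}(M_S(X),M_S(Y))$ with $H^0$ of the Hom-complex in $dgPrCor_S$, reduce full faithfulness to a comparison of that $H^0$ with the Chow group via a moving lemma, invert the Lefschetz object for part \emph{ii}), but it contains one genuine error and defers the only substantive step.

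The error is the assertion that ``every cycle on $X\times_S Y$ of dimension $d_X$ is automatically equi-dimensional of relative dimension $0$ over $X$.'' This is false: already for $S$ a field and $X=Y=\mathbb{P}^1$, the vertical cycle $\{x\}\times\mathbb{P}^1$ has dimension $d_X=1$ but has a one-dimensional fiber over $x$ and does not dominate $X$. The degree-zero term of $Hom_{dgPrCor_S}(X,Y)^*$ is $Cor_S(X,Y)$, the group of cycles finite over $X$ and dominating a component; so the map you try to define, $CH_{d_X}(X\times_S Y)\to H^0\,C^S(Y,0)^*(X)$ by ``sending a cycle class to its class in degree $0$,'' is not well defined. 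The honest natural map goes the other way, from $H^0$ of the equidimensional (Suslin-type) complex to $CH_{d_X}(X\times_S Y)$, by forgetting the equidimensionality condition, and the entire content of the proposition is that this map is an isomorphism. In particular ``surjectivity is clear from the definition'' begs the question: surjectivity is precisely the statement that every $d_X$-cycle can be moved within its rational equivalence class into the equidimensional part, i.e.\ the duality/moving lemma (Friedlander--Voevodsky duality over a field, and Levine's moving techniques over a semi-local base) that you explicitly leave unproved. Since that step is the whole proof, and the map constructed in your first paragraph is ill-defined as stated, the proposal has a genuine gap. Parts \emph{ii} and \emph{iii} are fine in outline but rest on the same unproved comparison.
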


Note that we have the following commutative diagram of functors

\[ \xymatrix{ \mathbf{Proj}_S \ar[r]_-{M_S} \ar@/^1.5pc/[rr]^{m_S} \ar[rd]_{c_S} & SmMot_{gm}^\eff(S) \ar[r]_-{\rho^\eff_S} & DM^\eff(S) \\
& ChMot^\eff(S) \ar[u]^{\psi^\eff_S} & } \]

where $m_S(X)$ is the image of $\Z^{tr}_S(X)$ in $DM^\eff(S)$. That $m_S=\rho^\eff_S\circ M_S$ follows from \cite[Lemma~6.8]{smmot}. We first show that
\begin{lem} For $X,Y$ in $\mathbf{Proj}_S$, we have
\[ \rho^\eff_S(M_S(X)\otimes M_S(Y))\iso \rho^\eff_S(M_S(X))\otimes_S\rho^\eff_S(M_S(Y)). \]
\end{lem}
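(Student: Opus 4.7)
The plan is to identify both sides of the proposed isomorphism with $m_S(X\times_S Y)\in DM^\eff(S)$ and to verify that the canonical lax-tensor map $\theta_{M_S(X),M_S(Y)}$ furnished by the previous corollary realises this identification. For the right-hand side, $\rho^\eff_S\circ M_S = m_S$ by \cite[Lemma~6.8]{smmot}, and since $\otimes_S$ on $DM^\eff(S)$ is induced from $\otimes^{tr}_S$, the definition on representable sheaves with transfers yields
\[ \rho^\eff_S(M_S(X))\otimes_S\rho^\eff_S(M_S(Y)) = \Z^{tr}_S(X)\otimes^{tr}_S\Z^{tr}_S(Y) = \Z^{tr}_S(X\times_S Y) = m_S(X\times_S Y). \]

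For the left-hand side, one first identifies $M_S(X)\otimes M_S(Y)\iso M_S(X\times_S Y)$ in $SmMot^\eff_{gm}(S)$. This is the representability (on objects of $\mathbf{Proj}_S$) of the pseudo-tensor structure on $dgPrCor_S$, valid under the semi-local essentially-smooth hypothesis by the construction of the tensor structure in \cite{tenssmmot}. Equivalently, Proposition~\ref{prop:chmot} together with the classical Chow-motives tensor structure $c_S(X)\otimes c_S(Y) = c_S(X\times_S Y)$ transports, via the fully faithful embedding $\psi^\eff_S$, to the analogous identification in $SmMot^\eff_{gm}(S)$. Applying $\rho^\eff_S$ then produces $m_S(X\times_S Y)$ again.

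It remains to verify that $\theta$ agrees with these two descriptions. Tracing Proposition~\ref{prop:pstensf}, $\theta$ is induced by the external-product map $\gamma_{X,Y}:C^S(X)\otimes^{tr}_S C^S(Y)\to C^S(X\times_S Y)$ built from $\boxtimes$ in \eqref{eqn:cstimes}. In $DM^\eff(S)$ the degree-zero inclusion $\Z^{tr}_S(X) = C^S(X)^0\hookrightarrow C^S(X)$ is an isomorphism (the Suslin complex of a smooth $S$-scheme is $\A^1$-equivalent to the representable sheaf), and in degree zero $\boxtimes$ is the external product of finite correspondences $Cor_S(Y,X)\otimes Cor_S(Y',X')\to Cor_S(Y\times_S Y',X\times_S X')$. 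After these identifications, $\theta$ reduces to the identity on $\Z^{tr}_S(X\times_S Y)$.

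The main obstacle will be verifying that $\gamma_{X,Y}$ descends to an isomorphism in $DM^\eff(S)$, i.e., that the positive-degree components of $C^S(X)\otimes^{tr}_S C^S(Y)$ are matched by $\gamma$ up to $\A^1$-homotopy with those of $C^S(X\times_S Y)$. This should follow from the bi-multiplication and homotopy-invariance properties of $\square^*$ underpinning the pseudo-tensor structure (see \cite[Theorem~2.4.2]{tenssmmot}): those hypotheses force the cubical contractions realising $C^S(-)\simeq\Z^{tr}_S(-)$ in $DM^\eff(S)$ to be compatible with $\boxtimes$ and with the bimultiplication maps $\delta^*_{a,b}$ that enter the definition of $\gamma$.
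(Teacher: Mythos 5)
Your first two paragraphs are essentially the paper's own proof: the paper identifies $M_S(X)\otimes M_S(Y)\iso M_S(X\times_S Y)$ by transporting $c_S(X)\otimes c_S(Y)\iso c_S(X\times_S Y)$ through the fully faithful embedding $\psi^\eff_S$ of Proposition~\ref{prop:chmot}, applies $\rho^\eff_S$, and concludes from $m_S(X\times_S Y)=m_S(X)\otimes_S m_S(Y)$ together with $m_S=\rho^\eff_S\circ M_S$. For the lemma as literally stated --- the existence of \emph{some} isomorphism between the two objects --- your argument and the paper's coincide and both are complete. Your last two paragraphs go beyond the paper: you attempt to verify that the canonical lax-tensor map $\theta_{M_S(X),M_S(Y)}$, i.e.\ the map induced by $\gamma_{X,Y}:C^S(X)\otimes^{tr}_S C^S(Y)\to C^S(X\times_S Y)$, is itself the isomorphism in question. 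The paper's proof of the lemma does not address this point, even though the theorem that follows invokes the lemma in exactly that stronger form (``by the above lemma, this [natural morphism] is an isomorphism when $M=M_S(X)$ and $N=M_S(Y)$''); an abstract isomorphism of objects does not by itself make a given natural map between them invertible. So you have correctly isolated the step that actually carries the weight in the subsequent theorem. That said, your treatment of it is only a sketch (``this should follow from the bi-multiplication and homotopy-invariance properties\dots''): to close it one must actually check that, under the $\A^1$-equivalences $\Z^{tr}_S(-)\simeq C^S(-)$, the map $\gamma_{X,Y}$ restricts in degree zero to the canonical identification $\Z^{tr}_S(X)\otimes^{tr}_S\Z^{tr}_S(Y)=\Z^{tr}_S(X\times_S Y)$, which requires unwinding $\boxtimes$ and the comultiplications $\delta^*_{a,b}$ of \eqref{eqn:cstimes}. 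As a proof of the stated lemma your proposal is correct and matches the paper; the additional step you flag is genuine but is not fully carried out here (nor, for that matter, in the paper).
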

\begin{proof}
It follows from Proposition~\ref{prop:chmot}.{\it iii}. that, 
\[ \psi^\eff_S(c_S(X)\otimes c_S(Y))\iso \psi^\eff_S(c_S(X))\otimes\psi^\eff_S(c_S(Y)). \]
Also, since $c_S(X)\otimes c_S(Y) \iso c_S(X\times_S Y)$, we have
\[ M_S(X\times_S Y) \iso M_S(X)\otimes M_S(Y). \]
Composing both sides with $\rho^\eff_S$, we get
\[ m_S(X\times_S Y) \iso \rho^\eff_S(M_S(X)\otimes M_S(Y)). \]
But, $m_S(X\times_S Y)=m_S(X)\otimes_Sm_S(Y)$ by definition. The lemma follows by noting that $m_S=\rho^\eff_S\circ M_S$.
\end{proof}

\begin{thm}
The functors $\rho^\eff_S$ and $\rho_S$ are tensor functors.
\end{thm}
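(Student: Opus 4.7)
The plan is to combine the preceding lemma with the lax tensor structure on $\rho^\eff_S$ established by the corollary at the end of Section~2, and to extend the resulting isomorphism through the triangulated generation of $SmMot^\eff_{gm}(S)$ before bootstrapping to $\rho_S$. That corollary provides a natural lax tensor morphism
\[
\theta_{M,N}: \rho^\eff_S(M) \otimes_S \rho^\eff_S(N) \longrightarrow \rho^\eff_S(M \otimes N)
\]
for $M, N \in SmMot^\eff_{gm}(S)$. My first task would be to verify that when $M = M_S(X)$ and $N = M_S(Y)$ with $X, Y \in \mathbf{Proj}_S$, this $\theta_{M,N}$ coincides with (the inverse of) the isomorphism constructed in the preceding lemma. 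Both maps are natural in $X$ and $Y$, and both arise from the external product of correspondences $\boxtimes$ in \eqref{eqn:cstimes}: the lemma routes it through the tensor structure on $ChMot^\eff(S)$ and the embedding $\psi^\eff_S$, while $\theta$ is built by Proposition~\ref{prop:pstensf} and Proposition~\ref{prop:homtens}.{\it ii.} Identifying the two is a diagram chase through the definitions of $\hat{\rho}$, $\pi$, and the pseudo-tensor structure on $\dg PrCor_S$.

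With this identification in hand, $\theta_{M_S(X), M_S(Y)}$ is an isomorphism for all $X, Y \in \mathbf{Proj}_S$. Fix $M = M_S(X)$ and consider the full subcategory of $SmMot^\eff_{gm}(S)$ consisting of those $N$ for which $\theta_{M,N}$ is an isomorphism. Both $\rho^\eff_S(M) \otimes_S \rho^\eff_S(-)$ and $\rho^\eff_S(M \otimes -)$ are exact functors, and $\theta_{M,-}$ is a natural transformation between them, so the usual five-lemma argument in triangulated categories shows that this subcategory is closed under shifts and cones. By \cite[Remark~1.4.4]{tenssmmot}, $SmMot^\eff_{gm}(S) \simeq \mc{K}^b(dgPrCor_S)^\natural$ is generated from $\{M_S(X) : X \in \mathbf{Proj}_S\}$ by shifts, cones, and idempotent splittings; naturality of $\theta$ under the splittings handles the last operation, so $\theta_{M,N}$ is an isomorphism for all $N$. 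Swapping the two slots and repeating the argument then shows $\rho^\eff_S$ is a tensor functor.

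For $\rho_S$: by \cite[\S~6.3]{smmot}, $\rho_S$ is obtained from $\rho^\eff_S$ by formally inverting the tensor product with the Tate motive, compatibly on both sides. Since $\rho^\eff_S$ is now a tensor functor sending the Tate object of $SmMot^\eff_{gm}(S)$ to that of $DM^\eff(S)$, the induced localization $\rho_S : SmMot_{gm}(S) \to DM_S$ inherits the tensor structure.

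I expect the main obstacle to be the identification in the first paragraph: matching the abstractly defined lax map $\theta$, coming from the pseudo-tensor machinery of Propositions~\ref{prop:homtens} and~\ref{prop:pstensf}, with the concrete isomorphism extracted from the tensor structure on Chow motives via $\psi^\eff_S$. Both ultimately descend from the external product of cycles, but making the agreement precise requires tracking how this external product is encoded in each of the two pseudo-tensor structures involved, and in particular how the symmetries $\delta_{a,b}$ appearing in \eqref{eqn:cstimes} interact with the Eilenberg--Zilber-type maps implicit in $\otimes^{tr}_S$.
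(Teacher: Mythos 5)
Your proposal follows essentially the same route as the paper: use the lax tensor morphism from the corollary of Section~2, observe via the preceding lemma that it is an isomorphism on the generators $M_S(X)\otimes M_S(Y)$, extend by shifts, cones and idempotent splittings, and then pass to $\rho_S$ by inverting the Lefschetz object. The one point you flag as the main obstacle --- checking that the natural lax map $\theta_{M_S(X),M_S(Y)}$ agrees with (the inverse of) the lemma's isomorphism, rather than merely that some isomorphism exists --- is in fact passed over silently in the paper's own proof, so your extra care there is warranted rather than a divergence.
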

\begin{proof}
In the previous section, we have already shown that $\rho^\eff_S$ is a lax tensor functor. That is, for objects $M, N$ in $SmMot^\eff_{gm}(S)$, there is a morphism in $DM^\eff(S)$
\[ \rho^\eff_S(M)\otimes_S\rho^\eff_S(N) \to \rho^\eff_S(M\otimes N). \]
which is natural in $M$ and $N$. But, by the above lemma, this is an isomorphism when $M=M_S(X)$ and $N=M_S(Y)$ for $X,Y$ in $\mathbf{Proj}_S$. Since $SmMot_{gm}^\eff(S)$ is generated by objects in $\mathbf{Proj}_S$ as an idempotently complete triangulated category, this shows that $\rho_S^\eff$ is a tensor functor. 
Since the composition
\[ SmMot_{gm}^\eff(S) \xrightarrow{\rho_S^\eff} DM^\eff(S)\to DM(S) \]
is a tensor functor that sends $\blank\otimes\mathbb{L}$ to an invertible endomorphism, it factors through a canonical extension
\[ \rho_S: SmMot_{gm}(S) \to DM(S) \]
which is also a tensor functor.
\end{proof}

\begin{cor}
For $X\in \mathbf{Proj}_S$ of dimension $d$, $X\mapsto X^D:=X\otimes \mathbb{L}^{-d}$ gives an exact duality
\[ D: SmMot_{gm}(S)^{op}\ra SmMot_{gm}(S).\]
\end{cor}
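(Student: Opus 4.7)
The plan is to transport Poincar\'e duality from $DM(S)$ back to $SmMot_{gm}(S)$ along the fully faithful tensor functor $\rho_S$ just established in Theorem~\ref{tf}. Full faithfulness allows one to lift the structure maps of a strong dual from $DM(S)$, while tensor compatibility (together with the fact that $\rho_S$ sends $\mathbb{L}$ to the Tate object and $M_S(X)$ to $m_S(X)$) guarantees that the lifted duals take the advertised form $M_S(X) \otimes \mathbb{L}^{-d}$.

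The first step is to invoke Poincar\'e duality in $DM(S)$: for $X \to S$ smooth projective of relative dimension $d$, the motive $m_S(X)$ is strongly dualizable with dual $m_S(X) \otimes \mathbb{L}^{-d}$, the coevaluation being induced by the diagonal class $[\Delta_X] \in CH^d(X \times_S X)$, and the triangle identities holding by the Cisinski--D\'eglise formulation of relative Poincar\'e duality. Since the domain and codomain of both the unit and counit lie in the image of $\rho_S$, and $\rho_S$ is fully faithful, these morphisms lift uniquely to $\tilde\eta_X$ and $\tilde\epsilon_X$ in $SmMot_{gm}(S)$; the triangle identities transfer for the same reason. Hence each $M_S(X)$ with $X \in \mathbf{Proj}_S$ is strongly dualizable in $SmMot_{gm}(S)$ with dual $M_S(X) \otimes \mathbb{L}^{-d}$.

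The second step is the extension from generators to all objects. In any symmetric monoidal triangulated category, the strongly dualizable objects form a thick subcategory closed under tensor products; by Theorem~\ref{int} and Proposition~\ref{prop:chmot}, the motives $M_S(X)$ together with their Tate twists generate $SmMot_{gm}(S)$ as an idempotently complete triangulated category. Therefore every object of $SmMot_{gm}(S)$ is strongly dualizable, and the assignment $M \mapsto M^D$ becomes a triangulated anti-equivalence $D : SmMot_{gm}(S)^{op} \to SmMot_{gm}(S)$, i.e., an exact duality.

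The main obstacle is verifying that the classical coevaluation in $DM(S)$, given concretely by the diagonal cycle, really arises as $\rho_S$ of something in $SmMot_{gm}(S)$ of the expected form. This is where Proposition~\ref{prop:chmot}.\emph{iii.} enters: the embedding $\psi^\eff_S : ChMot^\eff(S) \hookrightarrow SmMot^\eff_{gm}(S)$ realizes Chow correspondences as honest morphisms in $SmMot_{gm}(S)$, and the compatibility of $\rho_S$ with the cycle-class map identifies the image of $\psi^\eff_S(\Delta_X)$ (after the appropriate Tate shift) with the classical $\eta_X$. Once this compatibility is confirmed, the construction of $D$ is formal.
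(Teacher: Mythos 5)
Your proposal is correct and follows essentially the same route as the paper: transport Poincar\'e duality for $m_S(X)$ in $DM(S)$ back along the fully faithful tensor functor $\rho_S$, and use uniqueness of duals to get the involution. You merely make explicit two steps the paper leaves implicit (lifting the unit/counit and triangle identities via full faithfulness, and extending to all objects since dualizable objects form a thick tensor subcategory and the $M_S(X)$ generate), which is fine.
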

\begin{proof}
Note that for $X\in \mathbf{Proj}_S$ of dimension $d$ over $S$, the dual of $m_S(X)$ is $m_S(X)(-d)[-2d]$ in $DM(S)$. Since $\rho_S$
is fully faithful (see \cite[Corollary~6.14]{smmot}) and $\rho_S(X\otimes\mathbb{L}^{-d})=m_S(X)(-d)[-2d]$, we immediately get the duality in $SmMot_{gm}(S)$ as required. Since the dual object in a tensor category is unique upto unique isomorphism, this defines an exact duality involution $D$.
\end{proof}

\bibliography{biblo}{}
\bibliographystyle{amsalpha}
\end{document}